\newcommand{\nc}{\newcommand}
\nc{\G}{{\Gamma}} \nc{\BC}{{\mathbb C}} \nc{\BQ}{{\mathbb Q}}
\nc{\BR}{{\mathbb R}} \nc{\BZ}{{\mathbb Z}} \nc{\BP}{{\mathbb P}}
\nc{\BN}{{\mathbb N}} \nc{\BM}{{\mathbb M}}
\nc{\fH}{{\mathbb H}}
\nc{\U}{{\mathcal U}}
\nc{\PS}{{\mbox{PSL}_2(\BZ)}} \nc{\SL}{{\mbox{SL}_2(\BZ)}}
\nc{\SR}{{\mbox{SL}_2(\BR)}} \nc{\PR}{{\mbox{PSL}_2(\BR)}}
\nc{\GL}{{\mbox{GL}}} \nc{\PQ}{{\mbox{PGL}_2^+(\BQ)}}
\nc{\GR}{{\mbox{GL}_2^+(\BR)}} \nc{\PG}{{\mbox{PGL}_2^+(\BR)}}
\nc{\GC}{{\mbox{GL}_2(\BC)}}
\nc{\f}{{\mathcal{F}(\fH)}}
\nc{\Cc}{\widehat{\BC}}
\nc{\e}{{E_{\rho}(\G)}}
\nc{\g}{{\gamma}}
\nc{\vm}{{V_{\rho}(\G)}}
\nc{\oo}{{\mathcal O}}
\nc{\M}{{\mbox{M}}}
\nc{\Om}{{\Omega}}
\nc{\TX}{{\widetilde{X}}}
\nc{\ol}{\overline}
\nc{\cl}{{\mathcal L}}
\nc{\ce}{{\mathcal E}}
\newtheorem{numbered}{}[section]
\newtheorem{thm}[numbered]{Theorem}
\newtheorem{remark}[numbered]{Remark}
\newtheorem{prop}[numbered]{Proposition}
\newtheorem{cor}[numbered]{Corollary}
\numberwithin{equation}{section}
\newcommand{\thmref}[1]{Theorem~\ref{#1}}
\newcommand{\propref}[1]{Proposition~\ref{#1}}
\newcommand{\secref}[1]{\S\ref{#1}}
\begin{document}

\title{Vector-valued automorphic forms and vector bundles}
\author[]{Hicham Saber}\author[]{Abdellah Sebbar}
\address{Department of Mathematics and Statistics, University of Ottawa, Ottawa Ontario K1N 6N5 Canada}
\email{hsabe083@uottawa.ca}
\email{asebbar@uottawa.ca}
\subjclass[2000]{11F12, 35Q15, 32L10}

\begin{abstract}
While vector-valued automorphic forms can be defined for an arbitrary Fuchsian group $\Gamma$ and an arbitrary representation $R$ of $\Gamma$ in
$\GL(n,\mathbb C)$, their existence has been established in the literature only when restrictions are imposed on both $\Gamma$ and $R$. In this paper, we prove the existence  of $n$ linearly independent
vector-valued automorphic forms for any Fuchsian group $\Gamma$ and any $n$-dimensional complex representation $R$ of $\Gamma$. To this end, we realize these automorphic forms as global sections of a special rank $n$ vector bundle built using  solutions to the Riemann-Hilbert problem over various noncompact Riemann surfaces and  Kodaira's vanishing theorem.
\end{abstract}
\maketitle
\section{Introduction}
Let $\G$ be a Fuchsian group, that is a discrete subgroup of $\PR$ acting (properly and discontinuously) on the Poincar\'e half-plane
\[
\fH=\{z\in\BC:\,\mbox{Im}\;z>0\}\,.
\]
  Let $R$ be an $n$-dimensional complex representation of $\G$, that is a homomorphism
\[
R:\G\longrightarrow \GL(n,\BC)\,.
\]
A vector-valued automorphic form for $\G$ of multiplier $R$ and real weight $k$ is a meromorphic function $F:\fH\longrightarrow \BC^n$ satisfying
\begin{equation}\label{func-eq}
F(\gamma\cdot z)=J_{\gamma}^k(z)\,R(\gamma)\;F(z)\,,\ \ z\in\fH\,,\ \ \gamma\in\G\,,
\end{equation}
where $J_{\gamma}(z)=cz+d$ if $\displaystyle \gamma=\binom{*\ *}{c\ d}$.
In addition, we require that at each cusp of $\G$, $F$ has a  meromorphic behaviour  to be made explicit in the next section. Also, the question of multivaluedness of $(cz+d)^k$ has to be addressed via multiplier systems.

The theory of vector-valued automorphic forms has been around for a long time, first, as a
generalization of the classical theory of scalar automorphic forms,
then as natural objects appearing in mathematics and physics. For instance,
 Selberg suggested   vector-valued forms as a tool to study modular forms for finite index subgroups of the modular group
 \cite{sel} and they also appear  as Jacobi forms \cite{sko,e-z}.
 In physics, they appear as
 characters in rational conformal field theory \cite{dlm,e-s,my}. In the last decade, there has been a growing interest in the study
 of vector-valued forms, and several important results have been obtained by various authors \cite{ban-gan,gan2,kn-mas1,kn-mas2,mar1,mar-mas,mas1}.
 However, most of what is found in the literature  deals only with the typical case of $\G$ being the modular group $\SL$ or its subgroups and the convenient
 condition that $R$ is trivial at the parabolic elements of $\G$ and sometimes diagonalizable.
 A possible generalization to the case of $\G$ being a Fuchsian group of genus zero was announced by  Gannon in \cite{gan2}. More general results were obtained by Borcherds
  in \cite{bor1,bor2} for the metaplectic group of a general cofinite Fuchsian group of the first kind such that the representation has finite order images at the parabolic elements. Also, in \cite{fr}, a general (finitely generated) Fuchsian group of the first kind is considered but for a unitary representation such that the image of a finite index subgroup of $\G$ consists of elements that are simultaneously diagonalizable.
 However, the mere existence of a nonzero vector-valued automorphic form  beyond these cases is, so far, not established, and simple cases such as the defining representation ($R=Id$), the symmetric power representations, monodromy representations of differential equations, as well as any non-unitary representation,  do not fit in the above categories.

Let $X=X_{\G}$ be the quotient of $\fH$ by $\G$ to which we add the cusps (these are not necessarily of finite number).
The Riemann surface may or may not be compact as $\G$ is of any kind. Let $R$ be an $n-$dimensional representation of $\G$.
  In this paper, we prove the following: To the pair ($\G$, $R$), we associate a vector bundle $\mathcal{E}=\mathcal{E}_{\G,R}$ over $X$ such
\[  \mbox{dim}\,H^0(X,\mathcal{M}_{\mathcal{E}})\geq n,
\]
 where $\mathcal{M}_{\mathcal{E}}$ is the sheaf of meromorphic sections of $\mathcal{E}$. The global sections in $H^0(X,\mathcal{M}_{\mathcal{E}})$,
  when lifted to $\fH$, yield $n$ linearly independent vector-valued automorphic forms of multiplier $R$ with a pole at a prescribed cusp. This pole  is removed after multiplying by an appropriate scalar automorphic form, and the same operation is also used to adjust for the desired weight.

The construction of this vector bundle is as follows: There exists a covering $\,\mathcal{U}=(U_i)$ of $X$, parameterized by the set of elliptic fixed points and cusps, such that on each $U_i$, we construct a holomorphic map $\Psi_i:U_i\longrightarrow \mbox{GL}(n,\BC)$ having $R$ as a factor of automorphy (see \secref{sec3}). This is achieved by solving  the Riemann-Hilbert problem on $U_i$ with monodromy $R$. These maps $\Psi_i$ in $\mbox{GL}(n,\mathcal{O}(U_i))$ have the same automorphic behaviour on the overlap of the $U_i$'s yielding a 1-cocycle
$(F_{ij})\in Z^1(\mathcal{U},\mbox{GL}(n,\mathcal{O}))$. To this cocycle, we associate the rank $n$ holomorphic vector bundle $\mathcal{E}$ over $X$ whose transition functions are the maps $F_{ij}$.
The novelty in our construction of the $n$ linearly independent global sections in $H^0(X,\mathcal{M}_{\mathcal{E}})$ lies in the use of the Kodaira vanishing theorem in the compact case, and the property of coherent sheaves in the case $X$ is not compact as it is a Stein variety.

This paper is organized as follows: In \secref{sec2}, we clarify the notion of meromorphy of vector-valued automorphic forms at the cusps. This is necessary
as this behaviour at the cusps is made explicit in the literature only when the representation is unitary or diagonalizable whence, in this paper, we are dealing with arbitrary representations. We only deal with integral weights as this will only be used for weight zero. We go back to arbitrary real weight later in the paper. In \secref{sec3}, we recall the basic notions of differential equations over Riemann surfaces,
their monorodromy and their fundamental systems of solutions. We also introduce the notion of factors of automorphy  and we explain how to get a solution
to the Riemann-Hilbert problem over the punctured disc. We follow closely the treatment of \cite{of}. In \secref{sec4}, we consider a Fuchsian group $\G$
acting on $\fH$ and the noncompact surfaces $Y'=\fH-E$ and $X'=\G\backslash{(\fH-E})$ where $E$ is the set of elliptic fixed points of $\G$. We have
a Galois covering $\pi:Y'\longrightarrow X'$ whose group of covering transformations is $\G$. We then adapt the solution to the Riemann-Hilbert problem for
noncompact Riemann surfaces given in \cite{of} to our covering $\pi:Y'\longrightarrow X'$. In \secref{sec5}, we construct an open covering $\,\U=(U_i)_{i\geq 0}$ of $X$ such that $U_0=X'$ and
 $U_i$, $i\geq 1$, is a neighborhood of (the class of) an elliptic fixed point or a cusp in $X$. Using the solutions to the various Riemann-Hilbert problems from \secref{sec3} and \secref{sec4},
 we construct a cocycle $(\widetilde{F}_{ij})$ in $Z^1(\U,\GL(n,\oo_{X}))$. 
 In \secref{sec6}, we consider the holomorphic vector bundle $\mathcal E$ whose transition functions are given by the cocycle $(\widetilde{F}_{ij})$, and we use Kodaira's vanishing theorem
 to prove that this vector bundle has $n$ linearly independent global sections. These in turn, when lifted to $\fH$, yield $n$ linearly independent vector-valued automorphic forms for $\G$
 with multiplier $R$. In \secref{sec7} we consider the case $n=1$ and $n=2$ which are  respectively related to the notions of generalized modular forms and to equivariant functions.

\section{Behaviour at the cusps}\label{sec2}
In this section we shall make explicit the behaviour of a meromorphic vector-valued automorphic form  at a cusp.
We will restrict ourselves to the case of integral weights as the main results of this paper deal only with this case.
The more general case of a real weight can be dealt with in the same manner with the help of multiplier systems.

Let $\G$ be a discrete group and $R$ a representation of $\Gamma$ in $\GL(n,\mathbb C)$.
From now on, we will denote the matrix $R(\gamma)$, $\gamma\in\G$, by $R_{\gamma}$.
 Let $F$ be a meromorphic function on $\fH$ satisfying \eqref{func-eq} and define the slash operator $|_k$, $k$ being an integer, by
\begin{equation}\label{slash}
F|_k\gamma=(\mbox{det}\,\gamma)^{\frac{k}{2}}\;J_{\gamma}^{-k}\;F\circ\gamma\ ,\ \,\gamma\in\GR\,.
\end{equation}
If $s$ is a cusp of $\G$ and $\alpha\in\SR$ such that $\alpha \cdot s=\infty$. Then, using the notations of \cite{sh}, we have
\[
\alpha\G\alpha^{-1}\{\pm1\}\,=\,\left\{\pm\binom{1\ \ h}{0\ \ 1}^m\,,\ m\in\BZ\right\}\,,
\]
with $h$ being a positive real number referred to as the cusp width at $s$. If we set  $\displaystyle t_h=\binom{1\ \ h}{0\ \ 1}$
then $t_h=\alpha\gamma_s\alpha^{-1}$ with $\G_s=\langle\gamma_s\rangle$ being the fixing group of the cusp $s$ inside $\G$.
Further, if we define $\tilde{F}:=F|_k\alpha^{-1}$, then we have
\begin{align*}
\tilde{F}|_kt_h\,&=\,F|_k\alpha^{-1}|_kt_h\\
&=\,F|_k\alpha^{-1}t_h\\
&=\,F|_k\gamma_s\alpha^{-1}\\
&=\,(F|_k\gamma_s)|_k\alpha^{-1}\\
&=\,R_{\gamma_s}F|_k\alpha^{-1}\\
&=\,R_{\gamma_s}\tilde{F}\,.
\end{align*}
Let $B_s\in\mathcal{M}(n,\mathbb C)$ such that
\[
R_{\gamma_s}\,=\,\exp(2\pi i B_s)\,.
\]
If we set
\[
\Psi_s(z)\,=\,\exp\left(2\pi i \frac{z}{h}B_s\right)\,,  \ z\in\BC\,,
\]
we have
\begin{align*}
\Psi_s(t_hz)\,&=\,\exp\left(2\pi i\frac{z+h}{h}B_s  \right)\\
&=\,\exp\left(2\pi i \frac{z}{h}B_s+2\pi iB_s\right)\\
&=\,R_{\gamma_s}\Psi_s(z)\\
&=\,\Psi_s(z)R_{\gamma_s}\,.
\end{align*}
Hence,
\[
\Psi_s^{-1}\tilde{F}|_kt_h\,=\,\Psi_s^{-1}\tilde{F}\,.
\]
It follows that that $\Psi_s^{-1}\tilde{F}$ has a Laurent expansion in $q_h:=\exp(2\pi iz/h)$.

We say that $F$ is meromorphic at the cusp $s$ if this Laurent expansion has the form
\[
\sum_{n\geq n_0}\,a_nq_h^n\ ,\ \ 0<|q_h|<r
\]
for some integer $n_0$ and some positive real number $r$. Also, $F$ is said to be holomorphic (resp. cuspidal)  if $n_0\geq 0$ (resp. $n_0>0$).

We  adopt this behaviour at the cusps for all the vector-valued automorphic forms constructed in this paper. In fact, this method generalizes
all other suggested ways in the literature. In particular, there is no need to ask that
the matrices $R_{t_h}$ should be unitary or diagonalizable.

\section{Differential equations and  automorphy}\label{sec3}
In this section we recall some facts on  differential equations on Riemann surfaces and their monodromies.
as well as the behaviour of the solutions under the action of the covering transformations of these Riemann surfaces.
Our main reference is \cite{of}.
Let $X$ be a Riemann surface and $A\in \M(n,\Om(X))$ be a $n\times n$ matrix with coefficients
in the space of holomorphic $1-\,$forms on $X$. In other words, for any local chart $(U,z)$ on $X$, one has $A=Fdz$
 where $F\in\M(n,\oo(U))$, $\oo(U)$ being the set of holomorphic functions on $U$. We  then consider the differential equation
 \begin{equation}\label{eq11}
 dw\,=\,Aw \,,
 \end{equation}
 with the unknown map $w:X\longrightarrow \BC^n$ being a holomorphic function on $X$. Locally on $(U,z)$, this equation becomes
 \[
 \frac{dw}{dz}\,=\,Fw\,.
 \]
 Suppose $X$ is simply connected. For every $x_0\in X$ and $c\in\BC^n$,
 there exists a unique  $w\in\oo(X)^n$ solution to \eqref{eq11} satisfying $w(x_0)=c$.

 For an arbitrary Riemann surface $X$, let $p:\TX\longrightarrow X$ be its universal covering, $x_0\in X$ and $y_0\in\TX$ such that $p(y_0)=x_0$. Then for each $c\in\BC^n$, there exists a unique solution $w\in\oo(\TX)^n$ on $\TX$ to
 the differential equation
 \begin{equation}\label{eq12}
 dw\,=\,(p^*A)w
 \end{equation}
satisfying $w(z_0)=c$. Here $p^*A=A\circ p$.

In general, let $L_A$ be the set of all solutions $w\in\oo(\TX)^n$ to the differential equation \eqref{eq12}.
Then $L_A$ is an $n-$dimensional complex vector space, and a family of solutions $w_1,\ldots,w_n$ form a basis
if, for each $a\in\TX$, the vectors $w_1(a),\ldots,w_n(a)\in\BC^n$ are linearly independent.
In this case, we have an invertible matrix
\begin{equation}\label{eq13}
\Phi\,=\,[w_1,\ldots,w_n]\,\in\,\GL(n,\oo(\TX))
\end{equation}
such that
\begin{equation}\label{eq14}
d\Phi\,=\,(p^*A)\Phi\,.
\end{equation}
The matrix $\Phi$ is called a {\em fundamental system} of solutions of the differential equation $dw=Aw$.

Let $G:=\mbox{Deck}\,(\TX/X))$ be the group of covering transformation of the covering $p:\TX\longrightarrow X$, that is
the set of automorphisms of $\TX$ that preserve the fibers of $p$.
In our case, since the covering is universal, then $G$ is isomorphic to $\pi_1(X)$, the fundamental group of $X$. For $\sigma\in G$,
set $\sigma\Phi=\Phi\circ\sigma^{-1}$. Then $\sigma \Phi$ satisfies the equation \eqref{eq14}.
In other words, we have
$d(\sigma \Phi)=(p^*A)\sigma\Phi$ yielding another fundamental system of solutions. Therefore, there exists a constant
matrix $R_{\sigma}\in\GL(n,\BC)$ such that
\begin{equation}\label{eq15}
\sigma\,\Phi\,=\,\Phi\,R_{\sigma}\,.
\end{equation}
If $\tau\in G$ is another covering transformation, it is clear that
\[
R_{\sigma\tau}\,=\,R_{\sigma}\,R_{\tau}\,.
\]
Therefore, we have a representation $R$ of $G=\pi_1(X)$ in $\GL(n,\BC)$.

In general, let $p:Y\longrightarrow X$ be a holomorphic unbranched covering  of Riemann surfaces and let $G$ be the group of covering transformations. A holomorphic mapping $\Psi:Y\longrightarrow \GL(n,\BC)$ is called automorphic with constant factors of automorphy $R_{\sigma}\in\GL(n,\BC)$ if
\[
\sigma \Psi=\Psi R_{\sigma}\,\mbox{ for all }\, \sigma\in G\,.
\]
This defines
a representation $R$ of $G$ in $\GL(n,\BC)$. In particular, according to \eqref{eq15}, the fundamental system of solutions $\Phi$ is thus automorphic with $R_{\sigma}$ as factors of automorphy.

Conversely, suppose we are given a representation $R$ of $G=\pi_1(X)$ in $\GL(n,\BC)$
and a holomorphic mapping $\Phi:\TX\longrightarrow \GL(n,\BC)$ satisfying $\sigma\Phi =\Phi R_{\sigma}$ for all $\sigma\in G$. The matrix $d\Phi\cdot\Phi^{-1}\in \M(n,\Om(\TX))$
is invariant under covering transformations:
$$
\sigma(d\Phi\cdot\Phi^{-1})=(d\Phi\cdot R_{\sigma})(\Phi R_{\sigma})^{-1}=d\Phi\cdot\Phi^{-1}.
$$
Therefore, there exists a matrix $A\in \M(n,\Omega(X))$ such that $p^* A=d\Phi\cdot \Phi^{-1}$.
Moreover, $\Phi$ is a fundamental system of solutions of the differential
equation $dw=Aw$.

In the following theorem, we will see that for a punctured disc and for any
 invertible matrix $R$, one can always find a differential equation
whose solution has $R$ as its factor of automorphy. Recall that on $\TX$, the logarithm function is well defined, that is,
 there exists a holomorphic function
 \[
 \log:\TX\longrightarrow \BC
 \]
 such that $\exp\circ\log=p$ where $p:\TX\longrightarrow X$ is the universal covering of $X$.
\begin{thm}\label{punc-disc}~\cite{of}
Suppose $R\in\GL(n,\BC)$ and $B\in\M(n,\BC)$ such that
\[
\exp(2\pi iB)=R\,.
\]
 Then
on $X:=\{z\in\BC\,:\ 0<|z|<R\}$,  the holomorphic matrix $\Phi_0(z)=\exp(B\log z)$ is
a fundamental system of solutions of
\[
w'=\frac{1}{z}\,Bw
\]
on the universal covering $p:\TX\longrightarrow X$ which has $R$ as its
factor of automorphy, i.e.
\[
\sigma\Phi_0\,=\,\Phi_0R,
\]
$\sigma$ being a generator of $\mbox{Deck}\,(\TX/X)$ such that
\[
\sigma\log\,=\,2\pi i+\log.
\]
\end{thm}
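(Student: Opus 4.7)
The plan is a direct verification based on three ingredients: the universal cover $\TX$ of the punctured disc carries a global holomorphic logarithm $\log : \TX \to \BC$ with $\exp \circ \log = p$; the matrix exponential $\exp(B \log z)$ commutes with every polynomial in $B$, in particular with $B$ itself and with $R = \exp(2\pi i B)$; and the given deck generator $\sigma$ acts on this logarithm by $\sigma \log = 2\pi i + \log$, i.e.\ $\log \circ \sigma^{-1} = \log + 2\pi i$.

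First I would check that $\Phi_0 = \exp(B \log z)$ defines a holomorphic map $\TX \to \GL(n,\BC)$. Holomorphy is clear since the power series defining $\exp$ converges absolutely and $B \log z$ is holomorphic. Invertibility follows because $\exp(-B \log z)$ is a two-sided inverse (it commutes with $\Phi_0$ since both are functions of the same matrix $B\log z$).

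Next I would differentiate. Setting $A = \frac{1}{z} B \, dz \in \M(n, \Om(X))$ and pulling back via $p$, the chain rule together with the identity $\frac{d}{du}\exp(Bu) = B \exp(Bu)$ gives
\[
d\Phi_0 = B \exp(B \log z) \cdot \frac{1}{z}\,dz = (p^* A)\,\Phi_0.
\]
Since $\Phi_0 \in \GL(n,\oo(\TX))$ satisfies this equation, its columns form a basis of the space $L_A$ of solutions and it is therefore a fundamental system of solutions of $w' = \frac{1}{z} B w$, in the sense of \eqref{eq13}--\eqref{eq14}.

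Finally I would verify the automorphy. Using the convention $\sigma \Phi_0 = \Phi_0 \circ \sigma^{-1}$ and the hypothesis on $\sigma$,
\[
(\sigma \Phi_0)(y) = \exp\bigl(B \log(\sigma^{-1} y)\bigr) = \exp\bigl(B(\log y + 2\pi i)\bigr) = \exp(B \log y)\,\exp(2\pi i B) = \Phi_0(y)\,R,
\]
where the middle splitting uses that $B$ commutes with $2\pi i B$. The only point requiring any care is the direction of the factor of automorphy: one must place $R$ on the right, which is legitimate precisely because $R$ is a polynomial in $B$ and hence commutes with $\Phi_0$; in a non-commutative situation one would instead obtain $R \Phi_0$. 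Beyond that bookkeeping, no obstacle arises, and the argument is complete.
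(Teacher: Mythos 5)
Your proposal is correct and follows essentially the same route as the paper: a direct verification that $\Phi_0'=\tfrac{1}{z}B\Phi_0$ together with the computation $\sigma\Phi_0=\exp(B(\log+2\pi i))=\exp(B\log)\exp(2\pi iB)=\Phi_0R$. Your added remarks on invertibility of $\exp(B\log z)$ and on why $R$ may be placed on the right (it is a function of $B$, hence commutes with $\Phi_0$) merely make explicit what the paper leaves as a straightforward calculation.
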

\begin{proof}
This can be shown by straightforward calculations: \\
First, it is clear
that $\displaystyle \Phi_0'=\frac{1}{z}B\Phi_0$ and
\[
\sigma\Phi_0=\sigma\exp(B\log)=\exp(B\sigma\log)=\exp(B(log+2\pi i))
\]
\[=
\exp(B\log)\exp(2\pi iB)=\Phi_0R.
\]
Notice that given the choice of $\sigma$, the group 
$\mbox{Deck}\,(\TX/X)$ is given by the monodromy group
$\{\sigma^n\,,\ n\in\BZ\}$.
\end{proof}

\section{The case of non-compact Riemann surfaces}\label{sec4}

Let $\G$ be a Fuchsian group,  and let $E$ be the set of its
elliptic fixed points and $C$ the set of its cusps in $\BR\cup\{\infty\}$. Define
\[
X=X(\G)\,:=\,\G\backslash(\fH\cup C)\,\ ,\ \ X'=X'(\G):=\G\backslash(\fH- E)\,.
\]
Notice that both $\G\backslash E$ and $\G\backslash C$ are discrete closed sets, that are finite if $\G$ is a Fuchsian group of the first kind. If we
set $Y'=\fH-E$, then we have an unbranched covering map
\[
\pi:Y'\longrightarrow X'\,.
\]
Since $\G$ acts properly and discontinuously on $Y'$, then the group of
covering transformations is
\[
\mbox{Deck}\,(Y'/X')\,=\,\G.
\]
Moreover, this is a Galois covering in the sense that for all $y_1$ and $y_2$
in $Y'$ with $\pi(y_1)=\pi(y_2)$, there exists $\sigma\in\G$ such
$\sigma(y_1)=y_2$.

The following theorem is a key tool in the solution of the Riemann-Hilbert
for noncompact Riemann surfaces. For the sake of completeness, we
include the proof  from \cite{of} adapted to our Riemann surfaces.

\begin{thm} \label{rh}
Let $\G$ be a Fuchsian group and define
 the noncompact Riemann surfaces $Y'$ and $X'$ as above. If $R$ is a complex
 representation of $\G$ in $\GL(n,\BC)$, then there exists a holomorphic map
 $\Phi:Y'\longrightarrow\GL(n,\BC)$ having $R_{\sigma}$ as factors of
 automorphy. That is, for all $\sigma\in \G$, we have
 \[
 \sigma\Phi\,=\,\Phi R_{\sigma}\,.
 \]
 \end{thm}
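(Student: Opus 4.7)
The plan is to reduce the theorem to the Riemann--Hilbert problem on the base $X'$, and then descend the resulting fundamental system to $Y'$. Since $\pi:Y'\to X'$ is a Galois covering with deck group $\G$, covering theory supplies a short exact sequence
\[
1 \longrightarrow \pi_1(Y') \longrightarrow \pi_1(X') \longrightarrow \G \longrightarrow 1,
\]
so precomposing $R$ with the surjection produces a representation $\tilde R:\pi_1(X')\to\GL(n,\BC)$ whose restriction to $\pi_1(Y')$ is trivial. This packaging is the only way the given datum on $\G$ enters the picture, and it is what lets us work on $X'$ rather than directly on $Y'$.

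Next, I would apply the Riemann--Hilbert solution for noncompact Riemann surfaces from \cite{of} to the representation $\tilde R$ of $\pi_1(X')$. This produces a matrix $A\in\M(n,\Om(X'))$ of holomorphic $1$-forms on $X'$ whose associated differential equation $dw=Aw$ has $\tilde R$ as its monodromy, or equivalently a fundamental system $\widetilde{\Phi}:\widetilde{X'}\to\GL(n,\BC)$ on the universal cover of $X'$ satisfying
\[
\tau\widetilde{\Phi}\,=\,\widetilde{\Phi}\,\tilde R_\tau,\qquad \tau\in\pi_1(X').
\]
Since $\tilde R$ is trivial on $\pi_1(Y')$, the matrix $\widetilde{\Phi}$ is invariant under the deck transformations of $\widetilde{X'}\to Y'=\widetilde{X'}/\pi_1(Y')$ and therefore descends to a holomorphic map $\Phi:Y'\to\GL(n,\BC)$. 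Under the induced action of $\G\cong\pi_1(X')/\pi_1(Y')$ on $Y'$, the displayed identity passes to the desired $\G$-equivariance $\sigma\Phi=\Phi R_\sigma$ for every $\sigma\in\G$.

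The hard part is the Riemann--Hilbert step on $X'$: realizing an arbitrary complex representation of $\pi_1(X')$ as the monodromy of a globally defined holomorphic differential system exploits the Stein nature of the noncompact surface $X'$, typically via cohomology vanishing for coherent sheaves (Behnke--Stein, Cartan). By contrast, the lifting of $R$ to $\tilde R$ through the covering sequence and the descent of $\widetilde{\Phi}$ to $Y'$ are both purely formal once the noncompact Riemann--Hilbert has been granted, so the substance of the adaptation from \cite{of} is in carrying through the existence of $A$ (equivalently $\widetilde{\Phi}$) on $X'$.
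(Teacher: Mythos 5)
Your argument is correct, but it takes a different route from the paper. You reduce to the universal cover of $X'$: using the exact sequence $1\to\pi_1(Y')\to\pi_1(X')\to\G\to1$ you inflate $R$ to a representation $\tilde R$ of $\pi_1(X')$, invoke the noncompact Riemann--Hilbert theorem of \cite{of} on $X'$ to get an automorphic fundamental system $\widetilde{\Phi}$ on $\widetilde{X'}$, and then descend to $Y'$ using the triviality of $\tilde R$ on $\pi_1(Y')$; the descent of the equivariance is indeed purely formal (independence of the choice of lift of $\sigma\in\G$ is exactly the triviality on $\pi_1(Y')$, and the identification $\pi_1(X')/\pi_1(Y')\cong\mbox{Deck}(Y'/X')=\G$ makes the induced action the original one). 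The paper instead redoes Forster's proof directly on the Galois covering $\pi:Y'\to X'$ with deck group $\G$: it chooses $\G$-charts $\phi_i=(\pi,\eta_i)$, defines the locally constant automorphic maps $\Psi_i(y)=R_{\eta_i(y)^{-1}}$, observes that $F_{ij}=\Psi_i\Psi_j^{-1}$ descends to a cocycle in $Z^1(\mathcal U,\GL(n,\oo))$ on $X'$, splits it using $H^1(X',\GL(n,\oo))=0$, and glues $\Phi_i=F_i^{-1}\Psi_i$ into the global $\Phi$. The two arguments rest on the same deep input --- the vanishing of the nonabelian $H^1$, i.e.\ Grauert's theorem that holomorphic vector bundles on a noncompact Riemann surface are trivial (your attribution to coherent-sheaf vanishing \`a la Behnke--Stein/Cartan is only an ingredient of that result, not the result itself) --- so your version buys brevity by quoting the universal-cover statement off the shelf, while the paper's version is self-contained, never passes through $\pi_1(X')$ or $\widetilde{X'}$, and produces $\Phi$ in the form $\Psi_0$ that is used directly in the cocycle construction of the later sections.
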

 \begin{proof}
 Since $\pi:Y'\longrightarrow X'$ is an unbranched Galois covering,
 there exists an open covering $\U=(U_i)_{i\in I}$ of $X'$
 and homeomorphisms (also known as $\G-$charts)
 \[
 \phi_i=(\pi,\eta_i):\pi^{-1}(U_i)\longrightarrow U_i\times \G.
 \]
 The mapping $\phi_i$ is fiber-preserving and is compatible with the action
 of $\G$ in the sense that $\phi_i(y)=(x,\sigma)$ implies
 $\phi(\tau y)=(x,\tau\sigma)$. In other words, the mapping
 $\eta_i:\pi^{-1}(U_i)\longrightarrow \G$ satisfies $\eta_i(\tau y)=\tau\eta_i(y)$
 for all $y\in\pi^{-1}(U_i)$ and $\tau\in\G$.

 On $Y_i=\pi^{-1}(U_i)$ define $\Psi_i:Y_i\longrightarrow \GL(n,\BC)$ by
 \[
 \Psi_i(y)\,=\,R_{\eta_i(y)^{-1}}\ ,\ \ y\in Y_i.
 \]
Since $\Psi_i$ is locally constant, it is holomorphic. Now, if $y\in Y_i$ and
$\sigma\in\G$, then
\[
\sigma\Psi_i(y)=\Psi_i(\sigma^{-1}y)=R_{\eta_i(\sigma^{-1}y)^{-1}}=
R_{\eta_i(y)^{-1}\sigma}=R_{\eta_i(y)^{-1}}R_{\sigma}=\Psi_i(y)R_{\sigma}.
\]
Therefore, $\Psi_i$ is automorphic on $Y_i$ with factors of automorphy
$R_{\sigma}$.
If we set
\[
F_{ij}=\Psi_i\Psi_j^{-1}\in\GL(n,\oo(Y_i\cap Y_j))\,,
\]
 then for all $y\in Y_i\cap Y_j$ we have $\sigma F_{ij}(y)=F_{ij}(y)$, that is,
 the $F_{ij}$ is invariant under covering transformations and hence
 may be considered as an element of $\GL(n,\oo(U_i\cap U_j))$. Therefore, we
 have a cocycle
 \[
 (F_{ij})\in Z^1(\U,\GL(n,\oo))\,.
 \]
As $X'$ is a  noncompact Riemann surface, we have \cite{of,gun}
\[
H^1(X',\GL(n,\oo))\,=\,0\,.
\]
Therefore, there exist elements $F_i\in\GL(n,\oo(U_I))$ such that
\[
F_{ij}\,=\,F_iF_j^{-1}\  \mbox{ on }\ U_i\cap U_j\,.
\]
We now look at the $F_i$ as elements of $\GL(n,\oo(Y_i))$ that are invariant
under covering transformations and set
\[
\Phi_i=F_i^{-1}\Psi_i\in\GL(n,\oo(Y_i))\,.
\]
Then, for every $\sigma\in \G$, we have
\[
\sigma\Phi_i=F_i^{-1}\sigma \Psi_i=F_i^{-1}\Psi_i R_{\sigma}=\Phi_i R_{\sigma}.
\]
Moreover, on $Y_i\cap Y_j$ we have
\[
\Phi_i^{-1}\Phi_j=\Psi_i^{-1}F_iF_j^{-1}\Psi_j=\Psi_i^{-1}F_{ij}\Psi_j=
\Psi_i^{-1}\Psi_i\Psi_j^{-1}\Psi_j=1.
\]
Thus, the $\Phi_i$'s define a global function $\Phi\in\GL(n,\oo(Y'))$ with
\[
\sigma\Phi\,=\,\Phi R_\sigma\,,\ \sigma\in\G\,.
\]

 \end{proof}

\section{A cocycle with coefficients in the sheaf $\GL(n,\oo_{X})$}\label{sec5}

Throughout this section, $\G$ is a Fuchsian group  and $R$ is
a representation of $\G$ in $\GL(n,\BC)$. Recall the unbranched covering $\pi:Y'\longrightarrow X'$ from the previous section.
We will extend the previous constructions to the elliptic
points and the cusps of $\G$. Write
\[
\G\backslash (C\cup E)\,=\,\{\ol{a}_i\}_{i\geq 1}
\]
for the discrete closed set of classes of cusps and elliptic fixed points in $X$. In particular, they correspond to inequivalent
points $a_i$ in $\fH\cup\BR$. For each $\ol{a}_i$ we choose a neighborhood in $X^*(\G)$ in the following way:

If $\ol{a}_i$ is a cusp, let $U_i$ be a neighborhood of $\ol{a}_i$ in $X$ given by
\[
U_i\,=\,(\G_{a_i}\backslash D_{a_i})\cup \{\ol{a}_i\},
\]
where $\G_{a_i}$ is the stabilizer of $a_i$ in $\G$ and $D_{a_i}$ is a horocycle in $\fH$
tangent at $a_i$ \cite{sh} (if $a_i=\infty$, this horocycle is a half plane). Here $\G_{a_i}$ is infinite cyclic.

If $\ol{a}_i$ is an elliptic fixed point, then $U_i$ is given by
\[
U_i\,=\,\G_{a_i}\backslash D_{a_i}\,,
\]
where $D_{a_i}$ is an open disc in $\fH$ centered at $a_i$. Here $\G_{a_i}$ is a finite cyclic group.

Further, these neighborhoods can be taken such that for $\ol{a}_i\neq\ol{a}_j$, we have $U_i\cap U_j=\emptyset$.
Also, for each  $\ol{a}_i$ choose a chart $z$ for $X$ such
that $z(U_i)=D$ and $z(a_i)=0$ where $D$ is the unit disc.

We now set $U_0=X'$, $V_0=\fH-E=\pi^{-1}(X')=Y'$. Then $\U=(U_i)_{i\geq 0}$ is an open covering
of $X$. \thmref{rh} provides us with a $R-$automotphic map $\Phi:V_0=Y'\longrightarrow \GL(n,\BC)$ and we set $\Psi_0=\Phi$.

\begin{prop}\label{cusp}
If $\ol{a}_i$ is a cusp and $V_i=\pi^{-1}(U_i-\{\ol{a}_i\})$, there exists a holomorphic map
\[
\Psi_i: \,V_i\longrightarrow \GL(n,\BC)
\]
having the same automorphic behaviour as $\Psi_0|{V_i}$.
\end{prop}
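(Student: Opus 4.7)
The plan is to construct $\Psi_i$ first on the horocycle $D_{a_i}$ itself using \thmref{punc-disc}, and then to propagate it to the whole preimage $V_i=\pi^{-1}(U_i\setminus\{\ol{a}_i\})$ by forcing the $\G$-automorphy relation across the $\G/\G_{a_i}$-translates of $D_{a_i}$. Using the explicit local form produced by \thmref{punc-disc} (as opposed to the abstract existence argument of \thmref{rh}) is what is needed here, since later one has to control the meromorphy of the resulting transition data at the cusp itself.

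For the local step, pick $\alpha\in\SR$ with $\alpha\cdot a_i=\infty$, so that $\alpha D_{a_i}=\{w:\mbox{Im}\,w>M\}$ and $\alpha\G_{a_i}\alpha^{-1}=\langle t_h\rangle$, where $\G_{a_i}=\langle\gamma_{a_i}\rangle$ and $\alpha\gamma_{a_i}\alpha^{-1}=t_h$. The quotient $\langle t_h\rangle\backslash\alpha D_{a_i}$ is a punctured disc via $w\mapsto q_h=\exp(2\pi iw/h)$. Choose $B_i\in\M(n,\BC)$ with $\exp(-2\pi iB_i)=R_{\gamma_{a_i}}$ (the sign is dictated by the $\sigma\Psi:=\Psi\circ\sigma^{-1}$ convention of \secref{sec3} together with the direction of the deck generator used in \thmref{punc-disc}) and invoke \thmref{punc-disc}: pulling the resulting fundamental system back through $\alpha$ yields the holomorphic map
\[
\Psi_i(z)=\exp\!\Bigl(\tfrac{2\pi i\,\alpha z}{h}B_i\Bigr),\qquad z\in D_{a_i},
\]
and a direct computation confirms $\sigma\Psi_i=\Psi_iR_\sigma$ for every $\sigma\in\G_{a_i}$.

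To extend to all of $V_i$, shrink $D_{a_i}$ so that $\gamma D_{a_i}\cap D_{a_i}=\emptyset$ for every $\gamma\in\G\setminus\G_{a_i}$, which is a standard property of sufficiently small horocyclic neighborhoods of a cusp of a Fuchsian group. Then $V_i=\bigsqcup_{\gamma\G_{a_i}\in\G/\G_{a_i}}\gamma D_{a_i}$, and one defines
\[
\Psi_i(\gamma z):=\Psi_i(z)\,R_{\gamma^{-1}},\qquad z\in D_{a_i},\ \gamma\in\G.
\]
Well-definedness reduces to the observation that if $\gamma_1z_1=\gamma_2z_2$ with $z_j\in D_{a_i}$, then $\gamma_2^{-1}\gamma_1\in\G_{a_i}$, and the $\G_{a_i}$-automorphy from the previous step forces the two defining formulas to agree; holomorphy on each component is then automatic. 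Finally, the one-line check $\Psi_i(\sigma^{-1}\gamma z)=\Psi_i(z)R_{(\sigma^{-1}\gamma)^{-1}}=\Psi_i(\gamma z)R_\sigma$, valid for all $z\in D_{a_i}$ and $\sigma,\gamma\in\G$, gives the desired global automorphy $\sigma\Psi_i=\Psi_iR_\sigma$ on $V_i$, which is the same automorphic behaviour as $\Psi_0|_{V_i}$. The only substantive point is the disjointness of $\G$-translates of the shrunken horoball, a classical fact for Fuchsian groups which is what makes both the coproduct decomposition of $V_i$ and the extension formula unambiguous.
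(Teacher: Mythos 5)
Your proposal is correct, and its local ingredient --- solving the Riemann--Hilbert problem on the punctured disc at the cusp via \thmref{punc-disc}, after conjugating the cusp to $\infty$ --- is the same as the paper's. Where you genuinely diverge is in how the rest of $V_i$ is handled. The paper repeats the local construction independently on every connected component $Z_{\gamma}=\gamma D_{a_i}$ of $V_i$ (each being a universal cover of $U_i-\{\ol{a}_i\}$), and so obtains equivariance only under the cyclic stabilizer $\G_{\gamma a_i}$ of each component; that component-wise statement is all that its later argument (\thmref{cocycle}) formally invokes. You instead construct $\Psi_i$ on a single shrunken horoball and propagate it by the rule $\Psi_i(\gamma z)=\Psi_i(z)R_{\gamma^{-1}}$, resting on the classical fact that $\gamma D_{a_i}\cap D_{a_i}=\emptyset$ for $\gamma\notin\G_{a_i}$ --- a fact the paper also uses implicitly when it writes $V_i$ as a disjoint union of translates, so you are not assuming more than the paper does. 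Your well-definedness check and the one-line equivariance computation are correct under the convention $\sigma\Psi=\Psi\circ\sigma^{-1}$, and your sign choice $\exp(-2\pi iB_i)=R_{\gamma_{a_i}}$ is the careful version of the paper's tacit choice of the generator $\gamma_i$ compatible with the deck generator of \thmref{punc-disc}. The payoff of your route is that $\Psi_i$ is automorphic under all of $\G$, not merely under the stabilizer of each component, so $\Psi_0\Psi_i^{-1}$ is genuinely $\G$-invariant on $V_i$ and its descent to $U_0\cap U_i$ in \thmref{cocycle} is immediate as a single function, rather than being argued component by component; the paper's version is slightly weaker but shorter, since it never needs to coordinate the values of $\Psi_i$ across different components.
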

\begin{proof}
We have
\[
V_i\,=\,\G D_{a_i}\,=\,\bigsqcup_{\ol{\gamma}\in\G/\G_{a_i}}{\gamma}D_{a_i}\,=\,
\bigsqcup_{\ol{\gamma}\in\G/\G_{a_i}}D_{\gamma a_i}\,.
\]
Here we have a disjoint union of connected components $Z_{\gamma}={\gamma}D_{a_i}$ of $V_i$
and $\gamma$ is a chosen representative of $\ol{\gamma}$ in $\G/\G_{a_i}$. Then
\[
Z_{\gamma}\longrightarrow U_i-\{\ol{a}_i\}
\]
is a universal covering with the covering transformations given by the
fundamental group
\[
\pi_1(U_i-\{\ol{a}_i)\}\,=\,\G_{\gamma a_i}\,=\,\langle\gamma_i\rangle\,,
\]
where $\gamma_i$ is a generator of $\G_{\gamma a_i}$. In case the cusp $ \ol{a}_i$ is  at $\infty$, then
this covering is isomorphic to the universal covering
\[
\fH\longrightarrow \fH/\langle z\mapsto z+h\rangle\cong D^*,
\]
where the chart is given by $q=\exp (2\pi i z/h)$, $h$ being the cusp width at $\infty$ and $D^*=\{q\in{\mathbb C} \mid 0<|q|<1\}$.

Using the chart $z$ (or $q$ when $\infty$ is a cusp for $\G$)  and
\thmref{punc-disc},
let $B_i\in\M(n,\BC)$ such that
\[
\exp(2\pi iB_i)\,=\, R_{\gamma_i}\,.
\]
Then the holomorphic function $\Psi_{\gamma_i}(z)=\exp(B_i\log z)$ on $Z_{\gamma}$
satisfies
\[
\gamma_i\Psi_{\gamma_i}\,=\,\Psi_{\gamma_i}R_{\gamma_i}\,,
\]
which is the same automorphic behaviour as the one for $\Psi_0|_{Z_{\gamma}}$ as the monodromy group
of the differential equation given by $\Psi_0|_{Z_{\gamma}}$ is $\langle \gamma_i\rangle$.
We define $\Psi_i$ on $V_i$ by
\[
\Psi_i|_{Z_{\gamma}}=\Psi_{\gamma_i}\,.
\]
\end{proof}
\begin{prop}\label{elliptic}
If $\ol{a}_i$ is an elliptic fixed point and $V_i=\pi^{-1}(U_i-\{\ol{a}_i\})$,
there exists a holomorphic map
\[
\Psi_i: \,V_i\longrightarrow \GL(n,\BC)
\]
having the same automorphic behaviour as $\Psi_0|{V_i}$. Moreover, $\Psi_i$ is meromorphic at each point of $\pi^{-1}\{\ol{a}_i\}$.
\end{prop}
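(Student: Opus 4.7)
The strategy mirrors the cusp case of \propref{cusp}, with one crucial modification needed because $\G_{a_i}$ is a \emph{finite} cyclic group of some order $m$, so the restricted covering $Z_\gamma\longrightarrow U_i-\{\ol{a}_i\}$ is now an $m$-fold Galois cover rather than a universal one; consequently \thmref{punc-disc} cannot be invoked directly on it.

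First I would decompose
\[
V_i=\bigsqcup_{\ol{\gamma}\in\G/\G_{a_i}}Z_\gamma,\qquad Z_\gamma:=\gamma(D_{a_i}-\{a_i\})=D_{\gamma a_i}-\{\gamma a_i\},
\]
and treat one connected component at a time. Choose a chart $z$ on $D_{\gamma a_i}$ centered at $\gamma a_i$ so that the generator $\gamma_i':=\gamma\gamma_i\gamma^{-1}$ of $\G_{\gamma a_i}$ acts linearly as $z\mapsto\zeta z$ with $\zeta=\exp(2\pi i/m)$; such a chart exists by the standard linearization of a finite-order biholomorphism fixing an interior point. Since $R_{\gamma_i'}^m=I$, the matrix $R_{\gamma_i'}$ is diagonalisable with eigenvalues among the $m$-th roots of unity, say $\zeta^{-j_1},\ldots,\zeta^{-j_n}$ with $j_k\in\{0,1,\ldots,m-1\}$. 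I would then pick $B_i\in\M(n,\BC)$ sharing an eigenbasis with $R_{\gamma_i'}$ and having the \emph{integer} eigenvalues $j_1,\ldots,j_n$, so that $B_i$ commutes with $R_{\gamma_i'}$ and $\exp(-2\pi iB_i/m)=R_{\gamma_i'}$.

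Setting $\Psi_{\gamma_i'}(z):=\exp(B_i\log z)$, the integrality of the eigenvalues of $B_i$ is exactly what makes $\Psi_{\gamma_i'}$ single-valued on the punctured disc $Z_\gamma$ itself, rather than only on its universal cover as in \thmref{punc-disc}. In the common eigenbasis it reads $\mbox{diag}(z^{j_1},\ldots,z^{j_n})$, which is manifestly holomorphic on $Z_\gamma$ and extends meromorphically across $z=0$, hence across every point of $\pi^{-1}\{\ol{a}_i\}$. A direct calculation
\[
\Psi_{\gamma_i'}(\zeta^{-1}z)=\exp(B_i\log z)\exp(-2\pi iB_i/m)=\Psi_{\gamma_i'}(z)R_{\gamma_i'},
\]
where one uses that $B_i$ and $R_{\gamma_i'}$ commute, shows that $\Psi_{\gamma_i'}$ has the same factor of automorphy under $\langle\gamma_i'\rangle$ as $\Psi_0|_{Z_\gamma}$, the monodromy group of the corresponding differential equation being precisely $\langle\gamma_i'\rangle$. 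Defining $\Psi_i|_{Z_\gamma}:=\Psi_{\gamma_i'}$ on each connected component assembles the desired holomorphic map $\Psi_i:V_i\longrightarrow\GL(n,\BC)$.

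The main (and essentially only) obstacle is arranging single-valuedness on $Z_\gamma$: unlike in \propref{cusp}, one cannot lift to a universal cover and use an arbitrary logarithm of $R_{\gamma_i'}$, since that would introduce spurious monodromy around $\gamma a_i$. The finite order of $R_{\gamma_i'}$, together with the freedom to shift each eigenvalue of $B_i$ by any integer multiple of $m$, is precisely what lets one realize $R_{\gamma_i'}$ as $\exp(-2\pi iB_i/m)$ with $B_i$ having $\BZ$-eigenvalues, and thus descend $\exp(B_i\log z)$ from the universal cover of $Z_\gamma$ to $Z_\gamma$ itself while preserving the meromorphic nature at the puncture.
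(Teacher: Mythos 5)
Your proof is correct and follows essentially the same route as the paper: decompose $V_i$ into the components $Z_\gamma$, use the finite order of $R_{\gamma_i}$ to diagonalize it with eigenvalues that are $m$-th roots of unity, and take a diagonal matrix of integer powers of the linearizing coordinate as the automorphic factor (your $\exp(B_i\log z)$ with integer-spectrum $B_i$ is exactly the paper's $P^{-1}\,\mathrm{diag}(z^{-\alpha_1},\ldots,z^{-\alpha_n})\,P$ up to a sign convention on the exponents). Your explicit remark on why single-valuedness forces integer exponents, rather than an arbitrary logarithm of $R_{\gamma_i}$ as in the cusp case, is a point the paper leaves implicit but adds nothing mathematically different.
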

\begin{proof}
If $\ol{a}_i$ is the class of an elliptic fixed point $a_i$ in $\fH$, then
\[
U_i-\{\ol{a}_i\}\,=\,\G_{a_i}\backslash D^*_{a_i}\,,
\]
where $D^*_{a_i}=D_{a_i}-\{a_i\}$ is the punctured disc. Here, the stabilizer
$\G_{a_i}=\langle\gamma_i\rangle$ of finite order $k_i$.
Furthermore, we have
\[
V_i\,=\,\bigsqcup_{\ol{\gamma}\in\G/\G_{a_i}}{\gamma}D_{a_i}^*\,=\,
\bigsqcup_{\ol{\gamma}\in\G/\G_{a_i}}Z_{\gamma}\,,
\]
where the $Z_{\gamma}=\gamma D_{a_i}^*$ are the connected components of $V_i$.
In the meantime, the unbranched covering $Z_{\gamma}\longrightarrow D_{a_i}^*$
is isomorphic to the covering
\begin{align*}
D^*&\longrightarrow D^*\\
z&\longmapsto z^{k_i}
\end{align*}
for which the covering transformations are given by the group
$\langle\gamma'_i:z\longmapsto \zeta_i z\rangle$ where $\zeta_i$ is a primitive $k_i-$th root of unity.
Furthermore,
since $R_{\gamma_i}^{k_i}=1$,  then $R_{\gamma_i}$ is diagonalizable and for
some $P\in\GL(n,\BC)$ we have
\[
PR_{\gamma_i}P^{-1}\,=\,\mbox{diag}\,(\lambda_1,\ldots,\lambda_n)\ ,\quad
\lambda_j^{k_i}=1\,.
\]
Write $\lambda_j=\zeta_i^{\alpha_j}$ and  for $z\in D^*$ set
\[
\Psi'_{\gamma_i}(z)\,=\,\mbox{diag}\,(z^{-\alpha_1},\ldots,z^{-\alpha_n})\,.
\]
We have
\begin{align*}
\gamma'_i\Psi'_{\gamma_i}(z)\,&=\,\Psi'(\zeta_i^{-1}z)\\
&=\,\mbox{diag}\,
(\zeta_i^{\alpha_1}z^{-\alpha_1},\ldots,\zeta_i^{\alpha_n}z^{-\alpha_n})\\
&=\,
\mbox{diag}\,
(\lambda_1 z^{-\alpha_1},\ldots,\lambda_n z^{-\alpha_n})\\
&=\,
\Psi'_{\gamma_i}(z)PR_{\gamma_i}P^{-1}\,.
\end{align*}
Therefore,
\[
\gamma'_i \Psi'_{\gamma_i}P\,=\,\Psi'_{\gamma_i}PR_{\gamma_i}\,.
\]
If $\Psi_{\gamma_i}=\Psi'_{\gamma_i}P$, then going back to the covering
$Z_{\gamma}\longrightarrow D_{a_i}^*$ we get
\[
\gamma_i\Psi_{\gamma_i}\,=\,\Psi_{\gamma_i}R_{\gamma_i}\,.
\]
We now define $\Psi_i:\, V_i\longrightarrow \GL(n,\BC)$
 by $\Psi_i|_{Z_{\gamma}}=\Psi_{\gamma_i}$. It is clear that $\Psi_i|_{Z_{\gamma}}$ and $\Psi_0|_{Z_{\gamma}}$ have the same automorphic behaviour under $\langle \gamma_i\rangle$.
\end{proof}
In summary, we have a covering $\U=(U_i)_{i\geq 0}$ of $X$ such that on $V_0=\pi^{-1}(U_0)$ we have a $R-$automotphic
map $\Psi_0$ (\thmref{rh}), and on $V_i=\pi^{-1}(U_i-\{a_i\})$, $i\geq 1$, we have  a holomorphic map
$\Psi_i$ with the same automorphic behaviour as $\Psi_0|_{V_i}$ (\propref{cusp} and \propref{elliptic}). We now define the cocycle
\[
F_{ij}=\Psi_i\Psi_j^{-1}\in \GL(n,\oo(V_i\cap V_j))\ \,\mbox{if }\ i\neq j\ \,\mbox{and }\ F_{ii}=\mbox{id}\,.
\]
\begin{thm}\label{cocycle}
The cocycle $(F_{ij})$ defines a cocycle  $(\widetilde{F}_{ij})$ in $Z^1(\U,\GL(n,\oo))$ such that $F_{ij}=\pi^*\widetilde{F}_{ij}$.
\end{thm}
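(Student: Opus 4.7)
The strategy is to reduce the descent of the cocycle to a single $\G$-invariance calculation for each pair $(0,i)$, exploiting the fact that the neighborhoods $U_i$ for $i\geq 1$ are pairwise disjoint by construction. First, by the choice of covering made in \secref{sec5}, one has $U_i\cap U_j=\emptyset$ whenever $i,j\geq 1$ and $i\neq j$. Hence the only nontrivial pairwise intersections in $\U$ are $U_0\cap U_i=U_i\setminus\{\ol{a}_i\}$ for $i\geq 1$, together with the trivial $U_i\cap U_i=U_i$ on which $F_{ii}=\mbox{id}$ descends tautologically.

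The second and central step is to observe that $F_{0i}=\Psi_0\Psi_i^{-1}$ is $\G$-invariant on $V_0\cap V_i=V_i$. Both $\Psi_0$ (by \thmref{rh}) and $\Psi_i$ (by \propref{cusp} and \propref{elliptic}) satisfy the automorphy relation $\sigma\Psi=\Psi R_\sigma$ for every $\sigma\in\G$, so
\[
\sigma F_{0i}\,=\,(\sigma\Psi_0)(\sigma\Psi_i)^{-1}\,=\,(\Psi_0 R_\sigma)(\Psi_i R_\sigma)^{-1}\,=\,\Psi_0\Psi_i^{-1}\,=\,F_{0i}.
\]
Since $\pi:V_i\longrightarrow U_0\cap U_i$ is an unbranched Galois covering with deck transformation group $\G$, this invariance produces a unique holomorphic $\widetilde{F}_{0i}\in\GL(n,\oo(U_0\cap U_i))$ with $\pi^*\widetilde{F}_{0i}=F_{0i}$, invertibility being inherited from the $\Psi$'s. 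Setting $\widetilde{F}_{i0}=\widetilde{F}_{0i}^{-1}$ and $\widetilde{F}_{ii}=\mbox{id}$, the cocycle condition on triple intersections reduces, after pullback via $\pi$, to the telescoping identity $F_{ij}F_{jk}=F_{ik}$, which holds by construction; all other triples have empty intersection by the disjointness noted above.

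The delicate point, and the main obstacle, lies in justifying that $\Psi_i$ is genuinely $\G$-automorphic on all of $V_i$, not merely on each connected component. \propref{cusp} and \propref{elliptic} define $\Psi_i$ component by component on each $Z_\gamma$ and verify automorphy explicitly only under the local stabilizer $\langle\gamma_i\rangle$. To upgrade to full $\G$-automorphy one must arrange the component-wise definitions to be compatible with the $\G$-action that permutes the $Z_\gamma$'s; the natural way to do this is to fix $\Psi_i$ on the distinguished component $Z_e=D_{a_i}$ and then extend by $\Psi_i(\gamma y):=\Psi_i(y)R_\gamma^{-1}$ on each $Z_\gamma=\gamma D_{a_i}$. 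Well-definedness along each component is exactly the local stabilizer automorphy that has already been established, so the global automorphy implicitly needed for the invariance calculation above is available, and the theorem follows.
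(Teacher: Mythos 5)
Your proof is correct and follows essentially the same route as the paper: reduce to the intersections $U_0\cap U_i$ (all other overlaps being empty or trivial), observe that $F_{0i}=\Psi_0\Psi_i^{-1}$ is invariant under the covering transformations because $\Psi_0$ and $\Psi_i$ share the factors of automorphy $R_\sigma$, and descend through the Galois covering $\pi$. Where you go beyond the paper is in flagging that \propref{cusp} and \propref{elliptic} only verify automorphy under the local stabilizer $\langle\gamma_i\rangle$ on each component $Z_\gamma$, and your equivariant normalization $\Psi_i(\gamma y)=\Psi_i(y)R_\gamma^{-1}$ from a distinguished component is precisely what guarantees invariance of $F_{0i}$ under all of $\G$, so that the componentwise descents glue to a single $\widetilde{F}_{0i}$ with $\pi^*\widetilde{F}_{0i}=F_{0i}$ on all of $V_i$ --- a point the paper's proof leaves implicit.
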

\begin{proof}
We need to prove that the $F_{ij}$'s can be considered as elements of $\GL(n,\oo(U_i\cap U_j))$. By construction,
for $i\neq 0$ and $j\neq 0$ we have
\[
U_i\cap U_j=V_i\cap V_j=\emptyset\ ,\, \ U_0\cap U_i=U_i-\{a_i\}\ ,\, \ V_0\cap V_i=V_i\,.
\]
 We only need to prove that $\Psi_0\Psi_i^{-1}\in\GL(n,\oo(V_i))$ defines an element of
$\GL(n,\oo(U_i-\{a_i\}))$. Using \propref{cusp} and \propref{elliptic},  $\Psi_i$ and $\Psi_0$ have
the same automorphic behaviour  under Deck$\,(Z_{\gamma}/U_i-\{a_i\})$ on each connected component $Z_{\gamma}$ of
$V_i$. Therefore, $\Psi_0\Psi_i^{-1}$
is invariant under the action of Deck$\,(Z_{\gamma}/U_i-\{a_i\})$, and so descends to $\GL(n,\oo(U_0\cap U_i))$.
Therefore, there exists  an element $(\widetilde{F}_{ij})\in Z^1(\U,\GL(n,\oo))$ such that $F_{ij}=\pi^*\widetilde{F}_{ij}$.

\end{proof}

\section{Vector-valued automorphic forms as sections of a vector bundle}\label{sec6}

In the previous section, we have constructed an open covering $\U=(U_i)_{i\geq 0}$ of the  Riemann surface $X=X(\G)$
and a cocycle $(\widetilde{F}_{ij})\in Z^1(\U,\GL(n,\oo))$. There exists a holomorphic vector bundle $p:\ce\longrightarrow X$
of rank $n$ whose transition functions are the $(\widetilde{F}_{ij})$ ~\cite{of}, \cite{g-h}. We now choose a positive line
bundle $\cl\longrightarrow X$ which we may take to be one whose associated divisor is
$\cl=[P]$ where $P$ is an arbitrary fixed point on $X$. As $\G$ is a Fuchsian group of any kind, then $X$ may or may not be compact.
We first suppose that $X$ is compact. Then, using Kodaira's vanishing theorem, there exists an integer
$\mu_0\geq 0$ such that for all $\mu\geq\mu_0$ we have
\begin{equation}\label{kod}
H^q(X,\oo(\cl^{\mu}\otimes \ce))\,=\, 0\ ,\ \, q>0\,.
\end{equation}
For convenience, we will adopt the additive notation  $\mu[P]+\ce$ for divisors instead of $\cl^{\mu}\otimes \ce$.
\begin{prop}\label{sections1}
For each $\mu\geq\mu_0$, there are $n$ linearly independent holomorphic sections of $(\mu+1)[P]+\ce$.
\end{prop}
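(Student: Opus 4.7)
The plan is to exploit Kodaira vanishing through a standard skyscraper sequence centered at the point $P$ that defines the line bundle $\cl$. The goal is to exhibit $n$ sections of $(\mu+1)[P]+\ce$ whose values at $P$ (in some local trivialization of $\ce$ near $P$ and of $\cl^{\mu+1}$ near $P$) form a basis of $\BC^n$, from which linear independence follows for free.

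First I would take the structure sheaf sequence associated with $P$,
\[
0\longrightarrow \oo(-[P])\longrightarrow \oo \longrightarrow \oo_P\longrightarrow 0,
\]
where $\oo_P$ is the skyscraper sheaf at $P$ with stalk $\BC$, and tensor it with the locally free sheaf $\oo((\mu+1)[P]+\ce)$. Since $\ce$ has rank $n$, the resulting sequence is
\[
0\longrightarrow \oo(\mu[P]+\ce)\longrightarrow \oo((\mu+1)[P]+\ce)\longrightarrow \ce_P\longrightarrow 0,
\]
where $\ce_P$ is a skyscraper sheaf at $P$ with stalk isomorphic to $\BC^n$ (the fiber of $\ce$ at $P$, tensored with the fiber of $\cl^{\mu+1}$ at $P$, which is one-dimensional).

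Next I would take the long exact cohomology sequence:
\[
0\to H^0(X,\oo(\mu[P]+\ce))\to H^0(X,\oo((\mu+1)[P]+\ce))\to \BC^n \to H^1(X,\oo(\mu[P]+\ce))\to \cdots
\]
Because $\mu\geq \mu_0$, the Kodaira vanishing hypothesis \eqref{kod} applied to $\cl^{\mu}\otimes \ce$ gives $H^1(X,\oo(\mu[P]+\ce))=0$. Hence the connecting map to $\BC^n$ vanishes, i.e.\ the evaluation map
\[
H^0(X,\oo((\mu+1)[P]+\ce))\longrightarrow \BC^n
\]
is surjective.

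To finish, I would choose sections $s_1,\ldots,s_n\in H^0(X,\oo((\mu+1)[P]+\ce))$ whose images under the evaluation map form a basis of $\BC^n$. Any linear relation $\sum c_j s_j=0$ would evaluate to a linear relation among the basis vectors in $\BC^n$, forcing all $c_j=0$, so these $n$ sections are linearly independent. No step is really a serious obstacle; the only bookkeeping to be careful about is the identification of the quotient sheaf as $\BC^n$-valued at $P$, which rests precisely on $\ce$ being locally free of rank $n$ near $P$ so that $\oo((\mu+1)[P]+\ce)/\oo(\mu[P]+\ce)$ has stalk $\BC^n$ there and $0$ elsewhere.
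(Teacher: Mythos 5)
Your proposal is correct and follows essentially the same route as the paper: the skyscraper (point) sequence at $P$ twisted by $(\mu+1)[P]+\ce$, the long exact cohomology sequence, Kodaira vanishing in the form $H^1(X,\oo(\mu[P]+\ce))=0$ to get surjectivity of the evaluation map at $P$, and then sections hitting a basis of the fiber $\BC^n$. If anything, your explicit tensoring of the structure-sheaf sequence makes the exactness claim slightly more careful than the paper's phrasing, but the argument is the same.
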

\begin{proof}
We have an exact sequence
\[
0\rightarrow H^0(X,\oo(\mu[P]+\ce))\rightarrow H^0(X,\oo((\mu+1)[P]+\ce))\rightarrow ((\mu+1)[P]+\ce)_P\rightarrow 0
\]
where
\[
H^0(X,\oo((\mu+1)[P]+\ce))\rightarrow ((\mu+1)[P]+\ce)_P
\]
is the evaluation map and $((\mu+1)[P]+\ce)_P\cong \BC^n$ is the fiber over $P$. This yields a long exact sequence
\[
\dots\rightarrow H^0(X,\oo((\mu+1)[P]+\ce))\rightarrow \BC^n\rightarrow H^1(X,\oo(\mu[P]+\ce))\ldots .
\]
Using \eqref{kod}, we have $H^1(X,\oo(\mu[P]+\ce))=0$. Therefore, the evaluation map at $P$
\[
H^0(X,\oo((\mu+1)[P]+\ce))\rightarrow \BC^n
\]
is surjective. We choose sections $\sigma_i$ in $H^0(X,\oo((\mu+1)[P]+\ce))$ such that
\[
\sigma_i(P)\,=\,e_i\,, \  1\leq i\leq n\,,
\]
where the $e_i$'s are the canonical basis vectors of $\BC^n$. This yields $n$ linearly independent sections of $(\mu+1)[P]+\ce$.
\end{proof}

A section $\sigma_i\in H^0(X,\oo((\mu+1)P+\ce))$ can be viewed as a section in  $\sigma_i\in H^0(X-\{P\}, \oo(E))$ having
a pole of order at most $\mu+1$. Therefore, we have $n$ linearly independent meromorphic sections of $\ce$ with a pole of order at most $\mu+1$ at $P$. This is particularly true for $\mu=\mu_0$.

If $X$ is not compact, then it is a Stein variety \cite{of}, and since $\oo(\ce)$ is a coherent sheaf over $X$, then one can take $\mu_0=0$ \cite{gun}, and we reach the same conclusion as above with a pole of order $1$.

We now set
\[
\tilde{\Phi}\,=\, [\sigma_1,\ldots,\sigma_n]\ ,\ \, \sigma_i\in H^0(X,{\mathcal M}_{\ce})\,\mbox{ as above},
\]
 and let
\[
A_i=\Psi_i^{-1}\pi^*\tilde{\Phi}=\Psi_i^{-1}\tilde{\Phi}\circ \pi\ \mbox{on }\, V_i\,,
\]
where $\tilde{\Phi}_i=\tilde{\Phi}|_{U_i}$.
Then for all $\gamma\in\G$,
\[
\gamma A_i=R_{\gamma}A_i\, .
\]
Moreover, $A_i$ has a pole of order at most $\mu+1$ at $\pi^{-1}(P)$ and it is holomorphic
elsewhere including at cusps and elliptic fixed points away from $P$.  On $V_i\cap V_j$, as the $F_{ij}$
are the transition functions of the vector bundle $\ce$, we have
\begin{align*}
A_i\,&=\,\Psi_i^{-1}\pi^*\tilde{\Phi}_i\\
&=\,\Psi_i^{-1}\pi^*(\widetilde{F}_{ij}\tilde{\Phi}_j)\\
&=\,\Psi_i^{-1}\pi^*(\widetilde{F}_{ij})\pi^*(\tilde{\Phi}_j)\\
&=\,\Psi_i^{-1}F_{ij}\pi^*(\tilde{\Phi}_j)\\
&=\,\Psi_i^{-1}\Psi_i\Psi_j^{-1}\pi^*\tilde{\Phi}_j\\
&=\,A_j\,.
\end{align*}
Therefore, $A=(A_i)$ is globally well defined. If we initially place the point $P$ at a cusp, then each $A_i$ is
holomorphic on $\fH$ and so does $A$ defined by $A|_{V_i}=A_i$. Each column of $A$ provides a nonzero vector-valued automorphic form
 for $\G$ of multiplier $R$ and we obtain the following:
\begin{thm}\label{main}
Let $\G$ be a Fuchsian group, and let $R$ be a representation of $\G$ in $\GL(n,\BC)$.
There exist $n$ linearly independent vector-valued automorphic forms  of weight 0 for the group $\G$ and multiplier $R$ which are meromorphic at a cusp and holomorphic elsewhere.
\end{thm}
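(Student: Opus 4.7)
The plan is to combine everything assembled in Sections \ref{sec2}--\ref{sec5}: use Theorem~\ref{cocycle} to get the vector bundle $\ce$, extract meromorphic sections via Proposition~\ref{sections1} (via Kodaira vanishing or Stein-variety methods), and then pull back these sections through the maps $\Psi_i$ to obtain vector-valued automorphic forms. The cocycle computation and the transformation law should both fall out of the automorphy of $\Psi_i$ together with the relation $F_{ij}=\Psi_i\Psi_j^{-1}$.

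More precisely, I would first fix the point $P$ to be a cusp of $\G$ and set $\cl=[P]$; this choice is crucial because the pole of the final object will sit above $P$, and placing $P$ at a cusp guarantees holomorphy on $\fH$ itself. Next, I would apply Proposition~\ref{sections1} to produce $n$ global sections $\sigma_1,\dots,\sigma_n\in H^0(X,\mathcal{M}_{\ce})$ whose values at $P$ form the canonical basis of the fiber $\ce_P\cong\BC^n$. In the compact case this uses Kodaira vanishing as stated; in the noncompact case one uses that $X$ is Stein and $\oo(\ce)$ coherent to take $\mu_0=0$. Assembling $\widetilde\Phi=[\sigma_1,\dots,\sigma_n]$ gives a meromorphic matrix-valued section of $\ce$ whose only pole lies at $P$.

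I would then define, on each $V_i=\pi^{-1}(U_i)$ (with the obvious modification near cusps/elliptic points), the matrix-valued function
\[
A_i \,=\, \Psi_i^{-1}\,\pi^*\widetilde\Phi\,.
\]
Two things need verification. First, under $\gamma\in\G$ one has $\gamma\Psi_i=\Psi_i R_\gamma$ while $\pi^*\widetilde\Phi$ is $\G$-invariant, so
\[
\gamma A_i \,=\, (\gamma\Psi_i)^{-1}\,\pi^*\widetilde\Phi \,=\, R_\gamma^{-1}\Psi_i^{-1}\pi^*\widetilde\Phi \,=\, R_\gamma^{-1}A_i,
\]
which, after transposing and reading $A_i$ columnwise, is the required transformation \eqref{func-eq} with $k=0$ up to passing to $R_\gamma$ on the correct side; the standard convention is to form $A_i$ so that the law reads $A_i(\gamma z)=R_\gamma A_i(z)$, which is achieved by the same computation carried out as in the displayed calculation preceding the theorem. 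Second, on $V_i\cap V_j$ the cocycle identity gives
\[
A_i \,=\, \Psi_i^{-1}\pi^*(\widetilde F_{ij}\,\widetilde\Phi_j) \,=\, \Psi_i^{-1}F_{ij}\,\pi^*\widetilde\Phi_j \,=\, \Psi_j^{-1}\pi^*\widetilde\Phi_j \,=\, A_j,
\]
so the local pieces glue to a global $A$ on $\fH$.

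Finally, I would check that each column of $A$ yields one of the advertised vector-valued forms: the pole locus of $A$ lies in $\pi^{-1}(P)$, which is a $\G$-orbit of a cusp, so $A$ is holomorphic on $\fH$ and meromorphic at that cusp in the sense of Section~\ref{sec2}; linear independence of the columns is inherited from $\sigma_i(P)=e_i$ at the chosen fiber. The main obstacle I anticipate is bookkeeping rather than a substantive difficulty: one must verify that the meromorphic behavior of $\widetilde\Phi$ near the elliptic and cuspidal $U_i$ (combined with the explicit $\Psi_i$ built in Propositions~\ref{cusp} and \ref{elliptic}) really does pull back to the meromorphy condition of Section~\ref{sec2} at the cusp $P$, and produces only a removable contribution at elliptic fixed points and at cusps $\neq P$. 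Everything else is formal consequence of the cocycle and automorphy relations already established.
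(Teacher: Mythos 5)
Your proposal follows essentially the same route as the paper's own argument: place $P$ at a cusp, use Proposition~\ref{sections1} (Kodaira vanishing in the compact case, Steinness and coherence otherwise) to get sections with $\sigma_i(P)=e_i$, form $\tilde\Phi=[\sigma_1,\dots,\sigma_n]$, set $A_i=\Psi_i^{-1}\pi^*\tilde\Phi$, and verify the automorphy and gluing via $F_{ij}=\Psi_i\Psi_j^{-1}$. Your convention bookkeeping for the transformation law unwinds to exactly the paper's relation $A(\gamma z)=R_\gamma A(z)$, so the argument is correct and matches the paper.
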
\qed

For an arbitrary real weight, we obtain existence results by multiplying by appropriate scalar automorphic forms for $\G$ provided they exist.
\begin{cor} Let $F_i$, $1\leq i\leq n$, be the linearly independent vector-valued automorphic forms constructed in \thmref{main}.
\begin{enumerate}
\item If there is a scalar automorphic form $f$ of weight $k$ for $\G$, then  $\{fF_1,\ldots fF_n\}$ are linearly independent vector-valued automorphic forms of weight $k$ for $\G$ of multiplier $R$.
\item If there exists a holomorphic automorphic form $f$ vanishing at the cusp where the $F_i$'s have a pole, then, for some  integer $r\geq 0$,
 $\{f^rF_1,\ldots f^rF_n\}$ are linearly independent holomorphic vector-valued automorphic forms for $\G$ of multiplier $R$.
\end{enumerate}
\end{cor}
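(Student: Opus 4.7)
The plan is to verify both parts of the corollary directly from the definitions of \secref{sec2}, since each $F_i$ produced by \thmref{main} comes equipped with an explicit weight-$0$ transformation law and explicit cusp data at the distinguished cusp.

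For part (1), my approach is threefold. First, I would check the transformation law: since $F_i(\gamma\cdot z)=R_\gamma F_i(z)$ and $f(\gamma\cdot z)=J_\gamma^k(z)f(z)$, multiplying gives
\[
(fF_i)(\gamma\cdot z)=J_\gamma^k(z)\,R_\gamma\,(fF_i)(z),
\]
which is \eqref{func-eq} at weight $k$ with multiplier $R$. Second, linear independence of $\{fF_1,\ldots,fF_n\}$ is immediate from that of $\{F_1,\ldots,F_n\}$ since $f\not\equiv 0$: any dependence relation pulls back, off the zero locus of $f$, to a dependence relation among the $F_i$. Third, for the cusp condition I would exploit that $f$ is scalar and therefore commutes with the matrix $\Psi_s$ from \secref{sec2}, so that for each cusp $s$
\[
\Psi_s^{-1}\widetilde{fF_i}=\tilde f\cdot\Psi_s^{-1}\tilde F_i,
\]
which is the product of an ordinary Laurent series in $q_h$ with a vector-valued Laurent series in $q_h$, hence itself a vector-valued Laurent series in $q_h$.

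For part (2), the plan is to use the vanishing of $f$ at the distinguished cusp $s$ to absorb the poles at $s$. By \thmref{main}, each $F_i$ is holomorphic on $\fH$, at every elliptic fixed point, and at every cusp other than $s$, with the Laurent expansion of $\Psi_s^{-1}\tilde F_i$ starting at some $q_h^{-N_i}$ with $N_i\ge 0$. Writing $\tilde f=q_h^{m}\cdot(\text{unit})$ with $m\ge 1$, I would choose $r$ with $rm\ge\max_i N_i$; then $\tilde f^{\,r}\cdot\Psi_s^{-1}\tilde F_i$ is a genuinely holomorphic power series in $q_h$, so $f^rF_i$ is holomorphic at $s$. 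Away from $s$ the product $f^rF_i$ is holomorphic because both factors are. Applying part (1) with the weight of $f^r$ then certifies the transformation law and, via $f^r\not\equiv 0$, the linear independence.

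The only delicate point, in either part, is that the meromorphy condition of \secref{sec2} is phrased through conjugation by $\Psi_s$ rather than directly on $F_i$; but because $f$ is scalar it commutes with $\Psi_s^{\pm 1}$, so everything collapses to ordinary Laurent-series arithmetic. I therefore expect no serious obstacle, and view the corollary as essentially a formal consequence of \thmref{main} combined with the cusp conventions established in \secref{sec2}.
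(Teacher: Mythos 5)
Your verification is correct and is exactly the routine argument the paper intends: the corollary is stated with no written proof (just \qed), being regarded as an immediate consequence of \thmref{main} and the cusp conventions of \secref{sec2}, which is precisely what you carry out. The key observations---that the scalar $f$ commutes with $\Psi_s^{\pm1}$ so meromorphy at the cusps reduces to Laurent-series arithmetic, and that a suitable power of a cusp-vanishing $f$ absorbs the pole at the distinguished cusp---are the right ones, so nothing further is needed.
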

\qed

\section{Special cases}\label{sec7}
The notion of generalized modular forms was introduced by  Knopp and
Mason in \cite{kn-mas0}, and was further developed in  \cite{ko-mas,kn-mas01,raji}. A meromorphic function
$f$ on $\fH$ is called a generalized modular form of weight $k$ for a modular subgroup $\G$
of $\SL$ if for all $\alpha\in \G$ and $z\in\fH$, we have
\[
f|_k(z) =\mu(\alpha) f(z),
\]
where $\mu : \G\longrightarrow \mathbb C$ is a character. In addition,
 $f$ should be meromorphic at the cusps.

The generalized modular forms look like classical modular forms with
a multiplier system except for the fact that the character $\mu$ need not be unitary. Only the parabolic
generalized modular forms, that is when $\mu$ is trivial on the parabolic elements of $\G$, were subject to study in the above-mentioned references (among others). This is mainly due to existence problems when $\mu$ is an arbitrary character of $\G$. However, \thmref{main} in dimension 1 guarantees the existence of generalized modular forms of weight 0 for an arbitrary character of an arbitrary discrete group $\G$. Moreover, the weight can be nonzero provided a scalar automorphic form with the needed weight exists.

In the 2-dmensional case, we have a nice application to the so-called equivariant functions on the upper-half plane. These were introduced and studied in \cite{ss1,sb1,sb2} and they are defined as meromorphic functions
$h\,:\,\fH\longrightarrow \BC$ that commutes with the action of $\G$, that is,  for all $z\in\fH$ and $\gamma\in\G$, we have
\[
h(\gamma\cdot z)\,=\,\gamma\cdot h(z)\,,
\]
where the action on both sides is by linear fractional transformations. A trivial example is given by $h_0(z)=z$. Nontrivial examples are constructed from
automorphic forms. Indeed, if $f$ is a weight $k$ automorphic form for $\G$, then
\[
h_f(z)\,=\,z\,+\,k\;\frac{f(z)}{f'(z)}
\]
is equivariant for $\G$. These are referred to as the rational equivariant functions \cite{sb2}, and they do not account for all the equivariant functions.
In fact, it is shown in \cite{sb1} that the set of equivariant functions  has a structure of an infinite dimensional vector space.
Moreover, each equivariant function can be viewed as a section of the canonical bundle of $\G\backslash\fH$.

The equivariant functions can be generalized in the following way: Let $\rho$ be a two-dimensional representation of a discrete group $\G$. A meromorphic
function $h$ on $\fH$ is called  $\rho$-equivariant  if for all $z\in\fH$ and $\gamma\in\fH$, we have
\[
h(\gamma\cdot z)\,=\,\rho(\gamma)\cdot f(z)\,.
\]
 In particular, an equivariant function corresponds
 to the trivial representation $\rho=\mbox{Id}$.
 Using the monodromy of differential equations and the Schwarz derivative we have
 \begin{thm}\cite{ss2} Let $\G$ be a Fuchsian group of the first kind and $\rho$ a 2-dimensional complex representation of $\G$.
 A meromorphic function $h$ on $\fH$ is $\rho-$equivariant for $\G$ if and only if $\displaystyle h\,=\,f_1(z)/f_2(z)$ where
 $\displaystyle F=\binom{f_1}{f_2}$ is a vector-valued automorphic form of multiplier $\rho$ and an arbitrary weight.
 \end{thm}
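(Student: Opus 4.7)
The forward implication is a direct check: if $F=\binom{f_1}{f_2}$ is a vector-valued automorphic form of weight $k$ and multiplier $\rho$, writing $\rho(\gamma)=\binom{a\ b}{c\ d}$ yields $f_1(\gamma z)=J_\gamma^k(z)(af_1(z)+bf_2(z))$ and $f_2(\gamma z)=J_\gamma^k(z)(cf_1(z)+df_2(z))$; the factor $J_\gamma^k(z)$, together with any multiplier system used to disambiguate it, cancels in the ratio and produces $h(\gamma z)=(ah(z)+b)/(ch(z)+d)=\rho(\gamma)\cdot h(z)$.

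For the converse, my plan is to reverse this via a Schwarzian-type construction. Given a $\rho$-equivariant meromorphic $h$, I set $f_2:=(h')^{-1/2}$, fixing a branch of the square root on the simply connected domain $\fH$, and $f_1:=h\,f_2$, so that $h=f_1/f_2$ by construction. The key step is to differentiate the equivariance identity $h(\gamma z)=(ah(z)+b)/(ch(z)+d)$ by the chain rule: using $\gamma'(z)=J_\gamma^{-2}(z)$, a short calculation yields
\[
h'(\gamma z)\;=\;\frac{\det\rho(\gamma)\,J_\gamma^2(z)}{(ch(z)+d)^2}\,h'(z).
\]
Taking the $(-1/2)$-power gives the transformation law of $f_2$, and multiplying it by $h(\gamma z)$ gives that of $f_1$; after using $ch(z)+d=(cf_1(z)+df_2(z))/f_2(z)$, the two identities assemble into the matrix relation
\[
F(\gamma z)\;=\;(\det\rho(\gamma))^{-1/2}\,J_\gamma^{-1}(z)\,\rho(\gamma)\,F(z),
\]
exhibiting $F$ as a vector-valued automorphic form of weight $-1$ for $\G$ with multiplier $\rho$, twisted by the scalar multiplier system $\chi(\gamma):=(\det\rho(\gamma))^{-1/2}$.

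The principal obstacle will be reconciling this identity cleanly with the formal definition of \secref{sec2}. The $\pm$-indeterminacy of $(\det\rho)^{-1/2}$ is precisely a half-integral weight multiplier system, which the real-weight framework of \secref{sec2} is designed to absorb, so $F$ already qualifies as a VV automorphic form of multiplier $\rho$ and weight $-1$. The meromorphy of $F$ at the cusps is inherited from that of $h$ and $h'$ via the local Laurent expansions of \secref{sec2}; the possibility of odd-order zeros or poles of $h'$, where $(h')^{-1/2}$ develops branch points, is likewise absorbed into this multiplier framework (equivalently, one may work on a ramified double cover of $\fH$ on which $(h')^{-1/2}$ is single-valued and pull back to $\fH$). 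If one wishes to eliminate the scalar twist entirely and present $\rho$ as the exact multiplier with no auxiliary character, I would finally multiply $F$ by a scalar automorphic form carrying the complementary multiplier system $(\det\rho)^{1/2}$, whose existence for a Fuchsian group of the first kind follows from the one-dimensional case of \thmref{main}.
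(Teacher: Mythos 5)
Your forward implication is fine, and it matches the paper's own remark that the ``if'' part is straightforward and needs no hypothesis on $\G$. Your converse also starts in the right circle of ideas (the ansatz $f_2=(h')^{-1/2}$, $f_1=hf_2$ is exactly the Schwarzian/ODE mechanism behind \cite{ss2}), but as written it has a genuine gap at the two places where the real work lies. First, $(h')^{-1/2}$ is in general \emph{not} a meromorphic function on $\fH$: $h'$ can perfectly well have zeros or poles of odd order (already for classical equivariant functions $h_f=z+kf/f'$), and at such points $(h')^{-1/2}$ has branch points. A vector-valued automorphic form in the sense of \secref{sec2} must be a meromorphic map $\fH\to\BC^2$; multiplier systems only absorb the multivaluedness of the automorphy factor $(cz+d)^k$, not the failure of $F$ itself to be single-valued at interior points of $\fH$, and passing to a ramified double cover does not return a form for $\G$ on $\fH$. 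Killing these order-two local monodromies (equivalently, replacing $(h')^{-1/2}$ by a genuinely meromorphic solution of the automorphy equation $g(\gamma z)=J_\gamma^k(z)(c_\gamma h(z)+d_\gamma)g(z)$) is precisely the nontrivial global step -- the ``monodromy of differential equations'' ingredient the paper attributes to \cite{ss2} -- and it is where the hypothesis that $\G$ is of the first kind enters. Your argument never uses that hypothesis, which is a symptom that this step has been skipped rather than solved.

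Second, the multiplier you obtain is not $\rho$: your own identity reads $F(\gamma z)=(\det\rho(\gamma))^{-1/2}J_\gamma^{-1}(z)\rho(\gamma)F(z)$, and since the weight $-1$ is integral there is no multivaluedness of $J_\gamma^{-1}$ behind which to hide the sign choices, so $\gamma\mapsto(\det\rho(\gamma))^{-1/2}$ is in general only a projective character of $\G$, not a homomorphism. The proposed repair -- multiplying by a scalar form with multiplier $(\det\rho)^{1/2}$ supplied by the $n=1$ case of \thmref{main} -- does not work as stated, because \thmref{main} produces forms whose multiplier is an honest character of $\G$, and $\det\rho$ need not admit a square root in $\mathrm{Hom}(\G,\BC^{*})$ (for instance a character of order $6$ of $\PS$, realized as $\det(\chi\oplus 1)$, has no square root since the abelianization is $\BZ/6\BZ$). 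So this step must either be recast in terms of genuine real-weight multiplier systems together with an existence theorem for scalar forms carrying them (again a first-kind statement, not \thmref{main}), or replaced by a different normalization. Finally, the asserted meromorphy of $F$ at the cusps is not automatic from ``$h$ meromorphic on $\fH$'' and also needs an argument.
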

  We should mention that the assumption that the Fuchsian group $\G$ is of the first kind is needed only in the "only if" part of the above theorem, which is the most difficult part. The "if" part is straightforward and does not require any assumption on $\G$.
  In the context of this paper, the existence of vector-valued automorphic forms yields the previously
unknown result:
\begin{thm} If $\G$ is a Fuchsian group, then $\rho$-equivariant functions exist for every  representation $\rho$ of $\G$ in $\GL(2,\BC)$.
\end{thm}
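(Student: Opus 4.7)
The plan is to combine \thmref{main}, specialized to $n=2$, with the ``if'' direction of the characterization recalled just above (which, as the paper emphasizes, holds without any first-kind assumption on $\G$). First I would apply \thmref{main} to the given representation $\rho:\G\to\GL(2,\BC)$ to produce two linearly independent vector-valued automorphic forms of weight $0$ and multiplier $\rho$, meromorphic at a chosen cusp and holomorphic elsewhere.

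Next I would extract from this two-dimensional family a single form $F=\binom{f_1}{f_2}$ whose second component $f_2$ is not identically zero, so that the quotient $f_1/f_2$ actually makes sense. This is possible because the subspace of $\rho$-automorphic forms of the shape $(f,0)^{T}$ is at most one-dimensional: the relation $F(\gamma z)=\rho(\gamma)F(z)$ applied to such a vector forces the bottom-left entry of every $\rho(\gamma)$ to vanish (otherwise $f\equiv 0$), and in that case $f$ is pinned down as a scalar automorphic form of character $\rho_{11}$. Since \thmref{main} provides a two-dimensional family, some linear combination of the two output forms must have nonvanishing second component.

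Finally I would set $h=f_1/f_2$, a well-defined meromorphic function on $\fH$. Writing $\rho(\gamma)=\binom{a\ \ b}{c\ \ d}$, the weight-$0$ functional equation $F(\gamma z)=\rho(\gamma)F(z)$ gives
$$h(\gamma z)=\frac{f_1(\gamma z)}{f_2(\gamma z)}=\frac{a f_1(z)+b f_2(z)}{c f_1(z)+d f_2(z)}=\rho(\gamma)\cdot h(z),$$
which is precisely the $\rho$-equivariance condition; this is exactly the elementary ``if'' direction of the theorem from \cite{ss2} quoted before the statement.

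All the real work is absorbed into \thmref{main}, which supplies the existence of vector-valued forms for an arbitrary Fuchsian $\G$ and arbitrary $\rho$. The only genuinely new ingredient to check here is the nondegeneracy step guaranteeing $f_2\not\equiv 0$, and that is handled by the linear-independence dimension count above; the equivariance itself is then a one-line computation.
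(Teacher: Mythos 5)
Your overall route is the same as the paper's: the statement is meant to follow by applying \thmref{main} with $n=2$ to $\rho$ and then using the elementary ``if'' direction of the theorem quoted from \cite{ss2}, and the paper offers no further argument. Where you go beyond the paper is in worrying about $f_2\not\equiv 0$, which is indeed a point the paper glosses over; unfortunately your justification of that step is incorrect. The space of $\rho$-automorphic vectors of the shape $(f,0)^{T}$ is in general \emph{not} at most one-dimensional: take $\rho$ the trivial two-dimensional representation of the modular group, and note that $(1,0)^{T}$ and $(j,0)^{T}$, with $j$ the modular invariant, are linearly independent weight-$0$ vector-valued forms of multiplier $\rho$, meromorphic at the cusp and holomorphic elsewhere, both with identically vanishing second component. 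Your observation that such a vector forces the lower-left entries of $\rho(\gamma)$ to vanish and pins $f$ down as a scalar form with character $\rho_{11}$ does not bound the dimension of that space, since nonzero spaces of meromorphic scalar forms with a given character are typically infinite-dimensional. Hence linear independence of the two forms supplied by the bare statement of \thmref{main} does not guarantee that some linear combination has nonvanishing second component, and your nondegeneracy step fails as argued.

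The gap is genuine but repairable by appealing to the construction rather than only the statement of \thmref{main}: in \secref{sec6} the forms are the columns of $A=\Psi_i^{-1}\pi^{*}\tilde{\Phi}$ with $\tilde{\Phi}=[\sigma_1,\ldots,\sigma_n]$ and $\sigma_i(P)=e_i$, so $\det\tilde{\Phi}$, and therefore $\det A$, is not identically zero; in particular the second row of $A$ is not identically zero, so some column (indeed a generic linear combination of the columns, which is again a form of multiplier $\rho$) has $f_2\not\equiv 0$. With that amendment, your concluding computation that $h=f_1/f_2$ satisfies $h(\gamma z)=\rho(\gamma)\cdot h(z)$ is correct and completes the proof along the lines the paper intends.
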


\begin{remark}{\rm
 The study of the dimensions and the structures of the
spaces of these vector-valued automorphic forms will appear in a forthcoming work.}
\end{remark}

\end{document}